\newcommand{\imag}{\mathrm{i}}
\newcommand{\mathe}{\mathrm{e}}
\newcommand{\mathd}{\mathrm{d}}
\newcommand{\mydop}{\mathcal{D}}
\newtheorem{remark}{Remark}
\newtheorem{theorem}{Theorem}
\newenvironment{proof}[1][Proof]{\begin{trivlist}
\item[\hskip \labelsep {\bfseries #1}]}{\end{trivlist}}
\newcommand{\myqed}{\nobreak \ifvmode \relax \else
      \ifdim\lastskip<1.5em \hskip-\lastskip
      \hskip1.5em plus0em minus0.5em \fi \nobreak
      \vrule height0.75em width0.5em depth0.25em\fi}
\begin{document}
\title{A mathematical framework for determining the stability of steady states of reaction-diffusion equations with periodic source terms}
\author{Lennon \'O N\'araigh\footnote{Email address: \texttt{onaraigh@maths.ucd.ie}} and Khang Ee Pang}
\address{School of Mathematics and Statistics, University College Dublin, Belfield, Dublin 4, Ireland}


\date{\today}

\begin{abstract}
We develop a mathematical framework for determining the stability of steady states of generic nonlinear reaction-diffusion equations with periodic source terms, in one spatial dimension.  We formulate an \textit{a priori} condition for the stability of such steady states, which relies only on the properties of the steady state itself.   The mathematical framework is based on Bloch's theorem and Poincar\'e's inequality for mean-zero periodic functions.  Our framework can be used for stability analysis to determine the regions in an appropriate parameter space for which steady-state  solutions are stable. 
\end{abstract}

\keywords{Bounds, Reaction-Diffusion equations}

\maketitle

\section{Introduction and Problem Statement}
\label{sec:intro}

\noindent 

We introduce a mathematical framework for analyzing the stability of equilibrium solutions of generic reaction-diffusion equations with a periodic source term.  Nonlinear reaction-diffusion equations occur in the context of pattern formation, chemical reactions, mathematical biology, and phase separation of 
binary alloys~\cite{grindrod1996theory,murray2011mathematical,allen1972ground}).  The addition of a forcing term in such systems of equations can be used effectively to drive a chemical reaction to a desired outcome~\cite{lin2004resonance}.  Equally, a source term of travelling-wave type can be used to control the naturally-occurring travelling waves in such systems~\cite{bonhours2015}.
Motivated by these applications, in this article we consider the generic nonlinear reaction-diffusion equation
\begin{equation}
\frac{\partial C}{\partial t}=\sigma(x)+N(C)+\frac{\partial^2C}{\partial x^2},\qquad x\in (-\infty,\infty),\qquad t>0,
\label{eq:model_C}
\end{equation}
where $N(C)$ is a smooth nonlinear function of the variable $C$, and $\sigma(x)$ is a periodic source term, with $\sigma(x+L)=\sigma(x)$.  We seek periodic steady-state solutions $C_0(x)$ that satisfy
\begin{equation}
0=\sigma(x)+N(C_0)+\frac{\partial^2C_0}{\partial x^2},\qquad x\in (0,L),\qquad C_0(x+L)=C_0(x).
\label{eq:model_C_ss}
\end{equation}
If such solutions can be found, it is of interest to classify their stability according to linear stability theory.  As such, in this article we consider a solution
\begin{equation}
C(x,t)=C_0(x)+\delta C(x,t),
\label{eq:model_C_perturb_val}
\end{equation}
where $\delta C(x,t)$ is a small perturbation.  By substituting Equation~\eqref{eq:model_C_perturb_val} into Equation~\eqref{eq:model_C} and linearizing the nonlinear term $N(C)$, a linearized partial differential equation for the perturbation $\delta C(x,t)$ is obtained, which is valid provided the magnitude of the perturbation remains small in the sense that $|\delta C|\ll |N''(C_0)/N'(C_0)|$, for all $x$ and $t>0$ (here, $N'$ denotes the derivative of $N$ with respect to its argument and $N''$ denotes the second derivative).  As such, the following linearized partial differential equation for $\delta C$ is obtained:
\begin{equation}
\frac{\partial }{\partial t}\delta C=N'(C_0)\delta C+\frac{\partial^2}{\partial x^2}\delta C,\qquad x\in (-\infty,\infty),\qquad t>0.
\label{eq:model_C_perturb}
\end{equation}
The boundary conditions for Equation~\eqref{eq:model_C_perturb} are provided based on physical intuition, namely that the perturbations should vanish as $|x|\rightarrow \infty$.  In this article, we introduce a mathematical framework that enables us to derive sufficient conditions such that the solution $C_0(x)$ is stable, i.e. such that $\lim_{t\rightarrow \infty}|\delta C(x,t)|=0$.
As such, we emphasize that the purpose of this article is not to construct base-state solutions $C_0(x)$ corresponding to Equation~\eqref{eq:model_C_ss}, but rather to determine \textit{a priori} the stability of such solutions, once $C_0(x)$ is known.  General results concerning the existence of periodic solutions of Equation~\eqref{eq:model_C_ss} can be found elsewhere in the 
literature~\cite{seifert1959note,knobloch1962existence}.

This article is organized as follows.  In Section~\ref{sec:eigenvalue} we reduce Equation~\eqref{eq:model_C_perturb} to an eigenvalue problem, and we characterize the eigenvalues and eigenfunctions.  We formulate the condition for linear stability of the solution $C_0(x)$ in terms of eigenvalues of a certain linear operator.  In Section~\ref{sec:sufficient} we formulate sufficient conditions  such that $C_0(x)$ is a stable equilibrium solution of Equation~\eqref{eq:model_C}.  In Section~\ref{sec:disc} we compare our computed conditions with prior work in the literature.  We also apply our conditions in one specific case which is relevant to applications, and where an exact strability criterion in terms of a series expansion is known (Matthieu's equation).
Concluding remarks are presented in Section~\ref{sec:conc}.

\section{Eigenvalue  analysis}
\label{sec:eigenvalue}

We consider Equation~\eqref{eq:model_C_perturb}, written here in abstract terms as follows:
\begin{subequations}
\begin{equation}
\frac{\partial \phi}{\partial t}=s(x)\phi+\frac{\partial^2\phi}{\partial x^2},\qquad x\in (-\infty,\infty),\qquad t>0,
\label{eq:model}
\end{equation}
with smooth square-integrable initial condition
\begin{equation}
\phi(x,t=0)=\phi_0(x),\qquad \phi_0\in L^2(-\infty,\infty),
\label{eq:model_ic}
\end{equation}
and where $s(x)$ is an $L$-periodic function, with $s(x+L)=s(x)$.  We further specify the behavioral boundary condition
\begin{equation}
\phi(x)\rightarrow 0 \text{ as }|x|\rightarrow \infty.
\label{eq:model_bc}
\end{equation}%
\label{eq:model_all}%
\end{subequations}%
We apply separation of variables to Equation~\eqref{eq:model_all}, with $\phi(x,t)=\mathe^{-\lambda t}\psi(x)$.  This yields
\begin{equation}
-\lambda \psi(x)=\left[s(x)+\frac{\partial^2}{\partial x^2}\right]\psi(x),
\label{eq:eigpde}
\end{equation}
The linear operator $\mathcal{L}=s(x)+\partial_{xx}$ is translation-invariant under the group operation $x\rightarrow x+L$.  The corresponding group action on functions can be written in terms of a translation operator $\mathcal{T}_Lf(x)=f(x+L)$, for a generic function $f(x)$.  Thus, $\mathcal{L}$ and $\mathcal{T}_L$ commute, and are therefore simultaneously diagonalizable.  As such, the eigenfunctions of $\mathcal{L}$ can be written as 
\begin{equation}
\psi(x)=\psi_{np}(x)=\mathe^{\imag px}u_{np}(x),\qquad p\in [-\kappa/2,\kappa/2],\qquad n\in \{0,1,\cdots\},
\qquad \kappa=2\pi/L,
\label{eq:eigsln}
\end{equation}
where $u_{np}(x)$ is an $L$-periodic function and solves the self-adjoint problem
\begin{equation}
-\lambda_{np}u_{np}=\left[s(x)-p^2+2\imag p\frac{\partial}{\partial x}+\frac{\partial^2}{\partial x^2}\right]
u_{np}
\label{eq:bloch_op}
\end{equation}%
\begin{remark}
Equation~\eqref{eq:eigsln} is a particular application of Bloch's Theorem.
\end{remark}
By self-adjointness of the operator in Equation~\eqref{eq:bloch_op} the eigenfunctions $u_{np}$ can be normalized to satisfy
\begin{equation}
\int_0^L u_{np}^*(x)u_{n'p}(x)\,\mathd x=(L/2\pi)\delta_{nn'},
\end{equation}%
where the star indicates complex conjugation.
Hence also,
\begin{eqnarray*}
\int_{-\infty}^\infty \psi_{np}^*(x)\psi_{n'p'}(x)\,\mathd x&=&
\sum_{j=-\infty}^\infty \int_{jL}^{(j+1)L} \mathe^{\imag (p-p')x} u_{np}^*(x)u_{n'p'}(x)\,\mathd x,\\
&=&\sum_{j=-\infty}^\infty \mathe^{\imag (jL)(p-p')} \int_{0}^{L} \mathe^{\imag (p-p')x} u_{np}^*(x)u_{n'p'}(x)\,\mathd x,\\
&=&(2\pi/L) \delta(p-p') \int_{0}^{L} \mathe^{\imag (p-p')x} u_{np}^*(x)u_{n'p'}(x)\,\mathd x,\\
&=&\delta(p-p')\delta_{nn'},
\end{eqnarray*}
which is a completeness relation for the Bloch eigenfunctions.  As such,
\[
\phi_0(x)=\int_{-\kappa/2}^{\kappa/2} \left[\sum_{n=0}^\infty \langle \psi_{np},\phi_0\rangle \psi_{np}(x)\right]\mathd p,
\]
where
\[
\langle \psi_{np},\phi_0\rangle=\int_{-\infty}^\infty \left[\mathe^{\imag px} u_{np}(x)\right]^* \phi_0(x)\,\mathd x
\]
denotes the  inner product of $\psi_{np}$ with $\psi_0$ (the integral makes sense because $\phi_0(x)\in L^2(-\infty,\infty)$).  Hence, the general solution of Equation~\eqref{eq:model_all} reads
\[
\phi(x,t)=
\int_{-\kappa/2}^{\kappa/2} \left[\sum_{n=0}^\infty \langle \psi_{np},\phi_0\rangle \psi_{np}(x)\mathe^{-\lambda_{np}t}\right]\mathd p.
\]
Hence, in order for the solution $\phi(x,t)$ to remain bounded for all time, we require $\lambda_{np}\geq 0$, for all $p\in[-\kappa/2,\kappa/2]$ and all $n\in\{0,1,\cdots\}$, in Equation~\eqref{eq:bloch_op}.  This is precisely the condition for the steady-state solution of Equation~\eqref{eq:model_C} in Section~\ref{sec:intro} to be linearly stable.

\section{Sufficient conditions for stability}
\label{sec:sufficient}

In this section, we use  known results in the literature on the spectral theory for linear operators with periodic boundary conditions and determine criteria for $s(x)$ such that $\lambda_{np}\geq 0$, for all $p\in[-\kappa/2,\kappa/2]$ and all $n\in\{0,1,\cdots\}$, in Equation~\eqref{eq:bloch_op}.  For these purposes, it suffices to notice that Equation~\eqref{eq:eigpde} is in the form of a one-dimensional Schr\"odinger equation with periodic potential $-s(x)$.  Due to the importance of the Schr\"odinger equation with periodic potential
in applications~\cite{kittel1976introduction}, the spectral properties of this equation are well 
studied~\cite{eastham1973spectral,kobayashi1989periodic,ren2017electronic}; in particular it has been shown rigorously that the minimum eigenvalue occurs at $p=0$ and $n=0$~\cite{ren2017electronic}, such that
\[
\lambda_{np}\geq \lambda_{00},\qquad p \in [-\kappa/2,\kappa/2],\qquad n\in\{0,1,\cdots\}.
\]
Therefore, in order to establish the positivity of $\lambda_{np}$, it suffices to establish the positivity of $\lambda_{00}$.

Motivated by this discussion, we return to the eigenvalue analysis of Equation~\eqref{eq:bloch_op} with both $p=0$ and $n=0$, to determine sufficient conditions for $\lambda_{00}\geq 0$.  As such, 
we multiply Equation~\eqref{eq:bloch_op} (with $p=0$ and $n=0$) by $u_{00}^*$ and integrate from $x=0$ to $x=L$, applying the periodic boundary conditions to $u_{00}(x)$.  We obtain (after suppressing the subscripts $p=0$ and $n=0$):
\begin{equation}
\lambda \|u\|_2^2 = -\langle u,s u \rangle+\|\mathd u/\mathd x\|_2^2,
\end{equation}
where in this context the angle brackets $\langle \cdot,\cdot\rangle$ denote the usual $L^2$ inner product,
\[
\langle f,g\rangle=\int_0^L (f^*)g\,\mathd x,\qquad f,g,\in L^2([0,L]).
\]
Similarly, $\|f\|_2^2=\langle f,f\rangle$, for all $f\in L^2([0,L])$.
Clearly, if $s(x)\leq 0$ for all $x\in [0,L]$, then $\lambda\geq 0$.  This is certainly a sufficient condition for stability, however, it is highly prescriptive.  As such, introduce the mean value of a function,
\[
\langle f\rangle=\frac{1}{L}\int_0^L f(x)\mathd x,
\]
for any integrable $f$ function on $[0,L]$.  Accordingly, we examine $\langle s \rangle$, and we look at the possibility that $\langle s\rangle<0$ but that $\max_{[0,L]}s(x)=s_0\geq 0$.  We therefore derive constraints on $\langle s\rangle$ and $s_0$ such that $\lambda\geq 0$.  We summarize the results here:
\begin{theorem}
\label{thm:smax}
Let 
\[
\langle s\rangle=L^{-1}\int_0^L s(x)\,\mathd x,\qquad \delta s=s-\langle s\rangle.
\]
Suppose that $\langle s\rangle \leq 0$.  
The minimum eigenvalue $\lambda$ ($\lambda_{00}$ with subscripts restored) of Equation~\eqref{eq:bloch_op} is positive if
\begin{itemize}
\item $s(x)\leq 0$ for all $x\in [0,L]$, or less restrictively,
\item If $s_0=\max_{[0,L]}s(x)$ is positive, but 
%
%
\begin{subequations}
\begin{eqnarray}
s_0&\leq& \kappa^2,\label{eq:smaxa}\\
\|\delta s\|_2^2&\leq& \left(\kappa^2-s_0\right)|\langle s\rangle|.\label{eq:smaxb}
\end{eqnarray}
\label{eq:smax}
\end{subequations}

\end{itemize}
\end{theorem}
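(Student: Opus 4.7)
The starting point is the identity derived just before the theorem,
\[
\lambda \|u\|_2^2 = \|u'\|_2^2 - \int_0^L s(x) |u(x)|^2 \,\mathd x,
\]
where $u=u_{00}$ denotes the $p=0$, $n=0$ eigenfunction of \eqref{eq:bloch_op}. The first bullet is immediate: if $s\leq 0$ pointwise, the integral on the right is nonpositive, so $\lambda\geq 0$. All the work goes into the second bullet, where $s_0=\max_{[0,L]} s(x)\geq 0$.

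For that case I plan to exploit an orthogonal decomposition into mean and mean-zero parts on both $s$ and $u$. Write $s=\langle s\rangle + \delta s$ and $u=\langle u\rangle + \tilde u$, with $\tilde u$ an $L$-periodic mean-zero function. Substituting these into $\int_0^L s|u|^2\,\mathd x$ and using $\int_0^L \tilde u\,\mathd x=0$ to kill the cross terms multiplied by $\langle s\rangle$, I expect to arrive at
\[
\int_0^L s |u|^2\,\mathd x = L|\langle u\rangle|^2 \langle s\rangle + 2\,\mathrm{Re}\!\left(\langle u\rangle^* \int_0^L \delta s\,\tilde u\,\mathd x\right) + \int_0^L s|\tilde u|^2\,\mathd x.
\]
The first term is favorable since $\langle s\rangle\leq 0$. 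The middle term is controlled by Cauchy--Schwarz, $|\int_0^L \delta s\,\tilde u\,\mathd x|\leq \|\delta s\|_2\|\tilde u\|_2$. The last term uses the pointwise bound $s\leq s_0$ (valid once $s_0\geq 0$) to give $\int_0^L s|\tilde u|^2\,\mathd x\leq s_0\|\tilde u\|_2^2$. Poincar\'e's inequality for $L$-periodic mean-zero functions, $\|\tilde u\|_2\leq \kappa^{-1}\|\tilde u'\|_2=\kappa^{-1}\|u'\|_2$, then converts $\|\tilde u\|_2$ into $\|u'\|_2$ in both of these estimates.

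Combining the pieces, I arrive at a lower bound of the form
\[
\lambda\|u\|_2^2 \;\geq\; \bigl(1-s_0/\kappa^2\bigr)\|u'\|_2^2 \;-\; 2|\langle u\rangle|\kappa^{-1}\|\delta s\|_2\,\|u'\|_2 \;+\; L|\langle u\rangle|^2|\langle s\rangle|,
\]
which is a quadratic form in the two nonnegative quantities $\|u'\|_2$ and $|\langle u\rangle|$. Nonnegativity of this form for every admissible $u$ is equivalent to two conditions: the coefficient of $\|u'\|_2^2$ must be nonnegative, which is precisely \eqref{eq:smaxa}; and the discriminant in $\|u'\|_2$ must be nonpositive, which, after completing the square, reduces exactly to \eqref{eq:smaxb} (up to an $L$ factor depending on the normalization convention for $\|\delta s\|_2$).

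The main obstacle is not conceptual but one of careful bookkeeping. One must ensure the cross term really involves only $\delta s$, which relies on the vanishing mean of $\tilde u$; and one must apply Poincar\'e only to $\tilde u$, since the zero mode $\langle u\rangle$ is not controlled by $\|u'\|_2$ and must be retained as an independent parameter in the quadratic form. Once those estimates are in place, the reduction to a discriminant inequality is routine.
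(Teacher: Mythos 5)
Your argument is correct, and it takes a genuinely different route from the paper's. The paper uses the same mean-plus-fluctuation splitting of $s$ and $u$, but then eliminates $\langle u\rangle$ via the averaged equation, $\langle u\rangle=-\langle\delta s\,\delta u\rangle/(\lambda+\langle s\rangle)$ (after first disposing of the case $\lambda+\langle s\rangle=0$), substitutes back into the fluctuation equation, and arrives at a scalar relation $\lambda=a+b/(\lambda-c)$ whose coefficients $a,b$ still involve the unknown eigenfunction; nonnegativity of both roots is enforced by $b\le ac$, and the eigenfunction is then removed by a separate sup/inf step (Cauchy--Schwarz for the sup, Poincar\'e for the inf) over mean-zero test functions. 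You instead stay with the Rayleigh identity $\lambda\|u\|_2^2=\|u'\|_2^2-\int_0^L s|u|^2\,\mathd x$, keep $|\langle u\rangle|$ and $\|u'\|_2$ as independent quantities, and reduce everything to positive semi-definiteness of a $2\times2$ quadratic form, whose diagonal condition is \eqref{eq:smaxa} and whose discriminant condition is \eqref{eq:smaxb}. This buys you a cleaner argument: no case split on $\lambda+\langle s\rangle$, no manipulation of an equation nonlinear in $\lambda$ with solution-dependent coefficients, and the passage from the particular eigenfunction to a condition on $s$ alone is automatic rather than requiring the sup/inf step; the paper's route, in exchange, displays both roots of the reduced quadratic explicitly. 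One substantive remark: your discriminant condition reads $\|\delta s\|_2^2\le L(\kappa^2-s_0)|\langle s\rangle|$, with an explicit factor $L$, and this is not merely a normalization convention. With the paper's own definition $\|f\|_2^2=\int_0^L|f|^2\,\mathd x$, your form is the dimensionally consistent one, and it is also what the paper's argument produces if the $1/L$ hidden in the mean $\langle\delta s\,\delta u\rangle$ is carried through when it multiplies the inner product $\langle\delta u,\delta s\rangle$ (the step asserting $\langle\delta s\,\delta u\rangle\langle\delta u,\delta s\rangle=|\langle\delta u,\delta s\rangle|^2$ silently drops that factor). So your version of \eqref{eq:smaxb} is the inequality the proof actually supports; the theorem as printed omits the factor $L$.
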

	
\begin{proof}
The starting-point for the proof is Equation~\eqref{eq:bloch_op} (with $p=0$ and $n=0$) and its averaged version, written here with subscripts suppressed as
\begin{eqnarray*}
-\lambda u  &=& su +\mydop u ,\\
-\lambda \langle u \rangle&=&\langle s u \rangle,
\end{eqnarray*}
where $\mydop=\mathd^2/\mathd x^2$; here we have used the periodic boundary conditions on $u$ to write $\langle \mydop u\rangle=0$.  

We would like to apply Poincar\'e's inequality for mean-zero functions on the periodic domain $[0,L]$ to obtain an estimate on the eigenvalue $\lambda$, however, we are precluded from doing so directly, as it cannot be assumed that $\langle u\rangle=0$.  We therefore proceed by further decomposing $s$ and $u$ in terms of mean components and fluctuations:
\[
s=\langle s\rangle +\delta s,\qquad u=\langle u\rangle+\delta u.
\]
Hence,
\[
-\lambda \langle u \rangle=\langle s\rangle\langle u \rangle+\langle \delta s \delta u\rangle.
\]
We identify two cases:
\begin{itemize}
\item If $\lambda+\langle s\rangle=0$, then $\lambda=-\langle s\rangle\geq 0$ and the eigenvalue $\lambda$ is non-negative.
\item Otherwise, $\lambda+\langle s\rangle \neq 0$.
\end{itemize}
If Case 1 pertains, the eigenvalue $\lambda$ is positive, and we are done.  We therefore assume that Case 2 pertains, hence
\begin{equation}
\langle u\rangle=-\frac{ \langle \delta s \delta u\rangle}{\lambda+\langle s\rangle}.
\label{eq:meanup}
\end{equation}
We now further rewrite the eigenvalue problem~\eqref{eq:bloch_op} in terms of $\delta u$ and $\delta s$:
\[
-\lambda \delta u = \langle s \rangle \delta u+\langle u\rangle \delta s+\delta s \delta u-\langle \delta s\delta u\rangle+\mydop\delta u.
\]
We multiply both sides by $\delta u^*$ and integrate from $x=0$ to $x=L$.  We obtain
\begin{equation}
-\lambda \|\delta u\|_2^2=\langle s\rangle \|\delta u\|_2^2+\langle u\rangle\langle \delta u,\delta s\rangle + \langle \delta u,\delta s\delta u\rangle+\langle \delta u,\mydop\delta u\rangle.
\label{eq:deltaup}
\end{equation}
This can furthermore be written as
\begin{equation}
-\lambda \|\delta u\|_2^2= 
 \langle \delta u, s\delta u\rangle+
\langle u\rangle\langle \delta u,\delta s\rangle +\langle \delta u,\mydop\delta u\rangle.
\label{eq:deltaup1}
\end{equation}
We use  Equation~\eqref{eq:meanup} to eliminate $\langle u\rangle$ from Equation~\eqref{eq:deltaup}:
\[
%
\lambda \|\delta u\|_2^2=-\langle \delta u, s\delta u\rangle-\langle \delta u,\mydop\delta u\rangle+\frac{ \langle \delta s \delta u\rangle}{\lambda+\langle s\rangle}\langle \delta u,\delta s\rangle.
\]
Since $s(x)$ is real-valued, we have
\[
\langle \delta s \delta u \rangle \langle \delta u,\delta s\rangle=|\langle \delta u,\delta s\rangle|^2,
\]
hence
\begin{equation}
%
\lambda \|\delta u\|_2^2=\underbrace{-\langle \delta u, s\delta u\rangle-\langle \delta u,\mydop\delta u\rangle}_{=Q}+\frac{ |\langle \delta s, \delta u\rangle|^2}{\lambda+\langle s\rangle}.
\label{eq:abc1}
\end{equation}
In what follows, it will be necessary to have $Q\geq 0$.  We now formulate conditions on $s(x)$ such that $Q\geq 0$.  We have
\begin{eqnarray*}
Q&=& -\langle \delta u, s\delta u\rangle-\langle \delta u,\mydop\delta u\rangle,\\
 &=& -\langle \delta u, s\delta u\rangle+\|\frac{\mathd}{\mathd x}\delta u\|_2^2,\\
 &\geq & -s_0  \|\delta u\|_2^2+\|\frac{\mathd}{\mathd x}\delta u\|_2^2.
\end{eqnarray*}
We notice that $\delta u$ is a mean-zero differentiable function on the periodic domain $[0,L]$.  We therefore use
 Poincar\'e's inequality  $\|(\mathd/\mathd x)\delta u\|_2^2\geq \kappa^2 \|\delta u\|_2^2$ to write
\[
Q \geq \left(-s_0+\kappa^2\right)\|\delta u\|_2^2.
\]
Hence, we require
\[
s_0\leq  \kappa^2,
\]
which is exactly Equation~\eqref{eq:smaxa}

Continuing, Equation~\eqref{eq:abc1} can be written as
\begin{equation}
\lambda=a+\frac{b}{\lambda-c},
\label{eq:abc2}
\end{equation}
where $a$, $b$, and $c$ are positive real numbers:
\begin{equation*}
%
a=-\frac{\langle \delta u, s\delta u\rangle+\langle \delta u,\mydop\delta u\rangle}{\|\delta u\|_2^2}
=
 -\frac{\langle \delta u, s\delta u\rangle}{\|\delta u\|_2^2}
+\frac{\|\frac{\mathd}{\mathd x}\delta u\|_2^2}{\|\delta u\|_2^2},
\end{equation*}
and
\begin{equation*}
b= \frac{|\langle \delta u,\delta s\rangle|^2}{\|\delta u\|_2^2},\qquad
c=|\langle s\rangle|.
\end{equation*}
Equation~\eqref{eq:abc2} has solutions
\begin{equation}
\lambda=\frac{a+c}{2}\left[1\pm \sqrt{1-\frac{4(ac-b)}{(a+c)^2}}\right].
\label{eq:abc3}
\end{equation}
Since Equation~\eqref{eq:bloch_op} is a self-adjoint problem, the eigenvalues $\lambda$ are real-valued.  Furthermore, to make $\lambda\geq 0$ for both signs in Equation~\eqref{eq:abc3}, it suffices to make $b\leq ac$, hence
\[
%
\frac{|\langle \delta u,\delta s\rangle|^2}{\|\delta u\|_2^2}\leq 
\left[ -\frac{\langle \delta u, s\delta u\rangle}{\|\delta u\|_2^2}+
\frac{\|\frac{\mathd}{\mathd x}\delta u\|_2^2}{\|\delta u\|_2^2}\right]|\langle s\rangle|.
\]
A sufficient condition for this to be true for $\delta u$ is if
\[
%
\sup_{\phi\neq 0} \frac{|\langle \phi,\delta s\rangle|^2}{\|\phi\|_2^2}\leq 
\inf_{\phi\neq 0}\left[-s_0+\frac{\|\frac{\mathd\phi}{\mathd x}\|_2^2}{\|\phi\|_2^2}\right]|\langle s\rangle|,
\]
where the sup and inf are taken over all mean-zero differentiable functions on $[0,L]$.  The sup and inf can be calculated using the Cauchy--Schwarz and Poincar\'e inequalities respectively: 
%
%
\[
\|\delta s\|_2^2\leq \left(-s_0+\kappa^2\right)|\langle s\rangle|.
\]
This condition is precisely Equation~\eqref{eq:smaxb}.  This concludes the proof. 
\end{proof}

\section{Discussion}
\label{sec:disc}

In this section we compare the computed bound~\eqref{eq:smax} with prior work in the literature.  We also apply the bound in one specific case which is relevant to applications, and where an exact series solution for the stability boundary is known (Matthieu's equation).

\subsection{Comparison with other works}

 Equation~\eqref{eq:bloch_op} with $p=0$ amounts to a standard Sturm--Liouville eigenvalue problem with periodic boundary conditions.  There is much prior work in the literature concerning upper and lower bounds for the eigenvalues of Sturm--Liouville systems.  However, much of this literature is for separated eigenvalue problems (for instance, Reference~\cite{breuer1971upper} have computed a lower bound for the eigenvalue in the case of Dirichlet boundary conditions using the Sturm--Picone comparison theorem).  However, it is not immediately obvious that such work carries over to the periodic case.
Instead, we look at a more specific literature, since the eigenvalue problem $[s(x)+\partial_{xx}]u=-\lambda u$ (with periodic boundary conditions) is precisely Hill's equation, which is also well studied.
In particular, the least eigenvalue $\lambda_{00}$ is known to satisfy~\cite{kato1952note}
\[
\lambda_{00}\geq -\langle s\rangle-\frac{1}{8L} \|\delta s\|_2^2.
\]
Therefore, with $\langle s\rangle\leq 0$, $\lambda_{00}$ is positive provided
\begin{equation}
\|\delta s\|_2^2\leq 8 L |\langle s\rangle|.
\label{eq:sharp}
\end{equation}
Equation~\eqref{eq:sharp} is an additional criterion for stability, i.e.
a further condition on $s(x)$ such that $\phi(x,t)$ in Equation~\eqref{eq:model_all} remains bounded for all time.  
Equation~\eqref{eq:sharp} is useful as it highlights the importance of working with $\langle s\rangle\leq 0$: a negative mean value of $s(x)$  term gives a positive contribution to the eigenvalue $\lambda_{00}$, this then counteracts the negative contribution by $\|\delta s\|_2^2$ such that an overall positive eigenvalue $\lambda_{00}$ can be achieved.  This fact underlies our previous assumption in Section~\ref{sec:sufficient} wherein we worked with $\langle s\rangle \leq 0$ throughout.

\subsection{Specific application -- Mathieu's equation}

To illustrate how the conditions in Theorem~\ref{thm:smax} may apply, we consider the following specific form for $s(x)$:
\begin{equation}
s=-\alpha+\beta\cos(2\kappa x),
\label{eq:smatthieu}
\end{equation}
where $\alpha$ and $\beta$ are positive constants.  Equation~\eqref{eq:smatthieu} corresponds to some notional base state $C_0(x)$ in Equation~\eqref{eq:model_C_ss}.  For example, for a certain choice of source term $\sigma(x$), Equation~\eqref{eq:smatthieu} corresponds to the linearization of the inhomogeneous Allen--Cahn reaction--diffusion equation.  This is illustrated in Appendix~\ref{sec:allencahn}.  Equally, Equation~\eqref{eq:smatthieu} 
 corresponds to Mathieu's differential equation with eigenvalue $\lambda$, i.e. $y''+[-\alpha + \beta \cos(2\kappa x)]y=-\lambda y$.  Using the computed bound in Equation~\eqref{eq:smax}, the following sufficient condition for stability can be derived for the present specific form of $s(x)$:
\begin{equation}
\beta=-\alpha+\sqrt{3\alpha^2+2\kappa^2\alpha}
\label{eq:bound3}
\end{equation}

Looking at the Mathieu equation is useful not only for the applications, but also because it is a standard equation for which an exact criterion for stability can be extracted from known results in the literature, in terms of a Taylor series in $\beta$.  Specifically, the exact criterion for stability for the functional form $s(x)=-\alpha+\beta\cos(2\kappa x)$ can be worked out as follows~\cite{abramowitz1965handbook}:
\begin{equation}
\frac{\alpha}{\kappa^2}=\tfrac{1}{2}(\beta/\kappa^2)^2-\tfrac{7}{128}(\beta/\kappa^2)^4+\tfrac{29}{2304}(\beta/\kappa^2)^6-\cdots.
\label{eq:exact}
\end{equation}
The results of this comparison are shown in Figure~\ref{fig:bound3_matthieu1x}.   Our own bound~\eqref{eq:bound3} is very close to the exact stability boundary for the Mathieu equation.  It is also much  sharper than the one given by Equation~\eqref{eq:sharp}.  In other words, our own bound predicts stability for a wider range of values of $\alpha$ and $\beta$ when compared with Equation~\eqref{eq:sharp}, and is therefore less restrictive.

\begin{figure}[!h]
	\centering
		\includegraphics[width=0.8\textwidth]{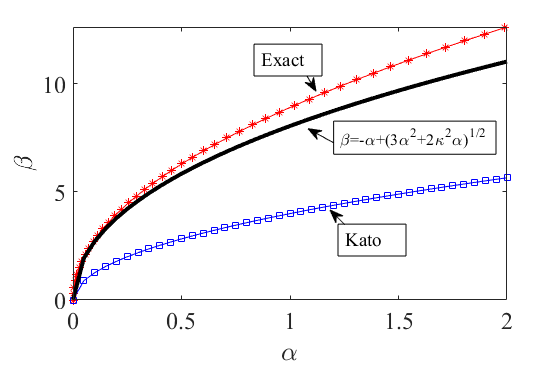}
		\caption{Region of stability for the Mathieu equation, i.e. the functional form $s(x)=-\alpha+\beta\cos(2\kappa x)$.  The exact stability boundary is computed using Equation~\eqref{eq:exact}.  This is compared with our own estimated stability criterion~\eqref{eq:bound3}, as well as a prior estimate from the literature~\cite{kato1952note}.}
	\label{fig:bound3_matthieu1x}
\end{figure}

\section{Conclusions}
\label{sec:conc}

Summarizing, we have formulated a model reaction-diffusion equation, posed on the entire real line, in the presence of a periodic source term, denoted by $\sigma(x)$.  We have introduced steady periodic solutions of the reaction-diffusion equation (the steady periodic solutions are referred to as the `base state').  We have performed a linear stability analysis of the base state.  In principle, the linear stability analysis relies on the solution of a linearized reaction-diffusion equation, with the base state appearing parametrically therein.  
However, by  formulating the linear-stability analysis in terms of Bloch functions, we have outlined an \textit{a priori} criterion for the base state to be stable, based only on the base-state solution itself, i.e. without needing to solve the linearized equation explicitly.    
%
%
We have compared our approach with an approach obtainable from prior results in the literature, and have demonstrated that our approach outlined herein compares favourably with the prior works.
We anticipate that our results will be of use in the study of reaction-diffusion equations where the base state contains multiple parameters (reaction rate, diffusivity, source amplitude, source lengthscale), since the \textit{a priori} stability criterion will then provide immediate answers as to which parameter values give rise to a stable base state.

\section*{Acknowledgments}

Khang Ee Pang acknowledges funding received from the UCD School of Mathematics and Statistics Summer Research Projects 2018 programme.   

\appendix

\section{The inhomogeneous Allen--Cahn equation with a highly specific source term}
\label{sec:allencahn}

In this section we demonstrate how the functional form $s(x)=-\alpha+\beta\cos(2\kappa x)$ in Equation~\eqref{eq:smatthieu} corresponds to the linearization of the inhomogeneous Allen--Cahn reaction diffusion equation~\cite{allen1972ground} with a specified periodic source term $\sigma(x)$. The approach is slightly contrived and amounts to a `manufactured solution'~\cite{roache2002code}, however, it does demonstrate the applicability of the methods considered in the main part of the paper.   We start with Equations~\eqref{eq:model_C_ss} 
and \eqref{eq:smatthieu} which we assemble here as follows:
\begin{subequations}
\begin{eqnarray}
N'(C_0)&=&s(x)=-\alpha+\beta \cos(2\kappa x),\\
0&=&\sigma(x)+N(C_0)+\frac{\partial^2C_0}{\partial x^2}.\label{eq:appyy}
\end{eqnarray}
\end{subequations}
We take $\partial_x\text{\eqref{eq:appyy}}$ to obtain $\sigma'(x)+N'(C_0)C_0'+C_0'''=0$.  We substitute  for $N'(C_0)=-\alpha+\beta \cos(2\kappa x)$ to obtain $\sigma'+[-\alpha+\beta \cos(2\kappa x)]C_0'+C_0'''=0$.  We propose a solution $C_0=\sin(\kappa x)$; this corresponds to a `manufactured solution' for a highly specific soure term, specifically,
\begin{equation}
\sigma'(x)=[\alpha-\beta \cos(2\kappa x)][\kappa \cos(\kappa x)]+\kappa^3\cos(\kappa x).
\label{eq:appss}
\end{equation}
As such, we have $N'(C_0)=\mathd N/\mathd C_0=-\alpha+\beta \cos (2\kappa x)$.  But $C_0=\sin(\kappa x)$, hence  $\cos (2\kappa x)=1-2C_0^2$, hence
\[
\frac{\mathd N}{\mathd C_0}=-\alpha+\beta(1-2C_0^2).
\]
Integrating once gives
\[
N(C_0)=(\beta-\alpha)C_0-\tfrac{2}{3}\beta C_0^3+\mathrm{Const.}
\]
We set the constant of integration to zero.  We then refer back to the original reaction-diffusion equation~\eqref{eq:model_C} -- this now  corresponds to an Allen--Cahn equation with a periodic source term $\sigma(x)$ given by the integral of~\eqref{eq:appss}.  The specific form of this Allen--Cahn-type equation is as follows:
\begin{equation}
\frac{\partial C}{\partial t}=\sigma(x)+\left[(\beta-\alpha)C-\tfrac{2}{3}\beta C^3\right]+\frac{\partial^2C}{\partial x^2}.
\label{eq:appmodel_C}
\end{equation}



\begin{thebibliography}{10}

\bibitem{grindrod1996theory}
P.~Grindrod.
\newblock {\em The Theory and Applications of Reaction-diffusion Equations:
  Patterns and Waves}.
\newblock Oxford applied mathematics and computing science series. Clarendon
  Press, 1996.

\bibitem{murray2011mathematical}
J.D. Murray.
\newblock {\em Mathematical Biology: I. An Introduction}.
\newblock Interdisciplinary Applied Mathematics. Springer New York, 2011.

\bibitem{allen1972ground}
Samuel~Miller Allen and John~W Cahn.
\newblock Ground state structures in ordered binary alloys with second neighbor
  interactions.
\newblock {\em Acta Metallurgica}, 20(3):423--433, 1972.

\bibitem{lin2004resonance}
Anna~L Lin, Aric Hagberg, Ehud Meron, and Harry~L Swinney.
\newblock Resonance tongues and patterns in periodically forced
  reaction-diffusion systems.
\newblock {\em Physical Review E}, 69(6):066217, 2004.

\bibitem{bonhours2015}
Juliette Bouhours and Gr\'egoire Na.
\newblock A variational approach to reaction-diffusion equations with forced
  speed in dimension 1.
\newblock {\em Discrete \& Continuous Dynamical Systems - A}, 35:1843, 2015.

\bibitem{seifert1959note}
George Seifert.
\newblock A note on periodic solutions of second order differential equations
  without damping.
\newblock {\em Proceedings of the American Mathematical Society},
  10(3):396--398, 1959.

\bibitem{knobloch1962existence}
H-W Knobloch et~al.
\newblock An existence theorem for periodic solutions of nonlinear ordinary
  differential equations.
\newblock {\em The Michigan Mathematical Journal}, 9(4):303--309, 1962.

\bibitem{kittel1976introduction}
Charles Kittel et~al.
\newblock {\em Introduction to solid state physics}, volume~8.
\newblock Wiley New York, 1976.

\bibitem{eastham1973spectral}
Michael Stephen~Patrick Eastham.
\newblock {\em The spectral theory of periodic differential equations}.
\newblock Scottish Academic Press [distributed by Chatto \& Windus, London,
  1973.

\bibitem{kobayashi1989periodic}
Toshiyuki Kobayashi, Kaoru Ono, and Toshikazu Sunada.
\newblock Periodic schr{\"o}dinger operators on a manifold.
\newblock In {\em Forum Mathematicum}, volume~1, pages 69--80. Walter de
  Gruyter, Berlin/New York, 1989.

\bibitem{ren2017electronic}
Shang~Yuan Ren.
\newblock {\em Electronic states in crystals of finite size: quantum
  confinement of Bloch waves}, volume 270.
\newblock Springer, 2017.

\bibitem{breuer1971upper}
Shlomo Breuer and David Gottlieb.
\newblock Upper and lower bounds on eigenvalues of sturm-liouville systems.
\newblock {\em Journal of Mathematical Analysis and Applications},
  36(3):465--476, 1971.

\bibitem{kato1952note}
Tosio Kato.
\newblock Note on the least eigenvalue of the hill equation.
\newblock {\em Quarterly of Applied Mathematics}, 10(3):292--294, 1952.

\bibitem{abramowitz1965handbook}
Milton Abramowitz and Irene~A Stegun.
\newblock {\em Handbook of mathematical functions: with formulas, graphs, and
  mathematical tables}, volume~55.
\newblock Courier Corporation, 1965.

\bibitem{roache2002code}
Patrick~J Roache.
\newblock Code verification by the method of manufactured solutions.
\newblock {\em Journal of Fluids Engineering}, 124(1):4--10, 2002.

\end{thebibliography}

\end{document}